\theoremstyle{definition}
\theoremstyle{remark}
\newtheorem{theorem}{Theorem}[section]
\newtheorem{lemma}[theorem]{Lemma}
\newtheorem{definition}[theorem]{Definition}
\newtheorem{corollary}[theorem]{Corollary}
\newtheorem{proposition}[theorem]{Proposition}
\newtheorem{remark}[theorem]{Remark}
\newtheorem{example}{Example}[section]
\numberwithin{equation}{section}
\DeclareMathOperator{\LT}{LT}
\DeclareMathOperator{\LM}{LM}
\DeclareMathOperator{\gen}{gen}
\DeclareMathOperator{\red}{red}
\DeclareMathOperator{\LC}{LC}
\begin{document}

\title{Multi-Rees algebras on principal ideal rings}

\author{Babak Jabbar Nezhad}
\address{Istanbul, Turkey}
\email{babak.jab@gmail.com}

\subjclass[2010]{Primary 13A30,13P10,13B25}

\date{}
\keywords{Gr\"{o}bner bases, multi-Rees algebra}
\thanks{Babak Jabbar Nezhad has also published under the name Babak Jabarnejad~\cite{jabarnejad2016rees}}

\dedicatory{}

\begin{abstract}
When $R$ is a Noetherian ring and we have a family of ideals in which every ideal contains at least one nonzero divisor, then it is already known that the defining ideal of the multi-Rees algebra of these ideals is equal to a saturated ideal. In such a case to get the defining ideal of the multi-Rees algebra we only need to saturate the first syzygies of direct sum of this family of ideals. However, this fact is not true, when at least one of these ideals does not contain any nonzero divisor. In this paper we show that the defining ideal of the multi-Rees algebra of a family of ideals of a polynomial ring over a principal ideal ring, is equal to another kind of saturated ideal, where we saturate more explicit polynomials other than just first syzygies. Please notice that in general some of these ideals may not contain nonzero divisors. Given this explicit formula, we can compute Gr\"{o}bner basis of the defining ideal using an elimination order, which we talk about in the present paper.
\end{abstract} 

\maketitle

\section{Introduction}

The concept of Gr\"{o}bner bases for polynomial rings over a field was presented by Buchberger~\cite{buchberger1965algorithmus}. He also gave generalizations of this concept over some rings (e.g.~\cite{buchberger1984critical}). During decades this concept got many applications in different areas, including applied mathematics, pure mathematics and engineering. Besides,  Trinks~\cite{t78}, gave a natural generalization of Gr\"{o}bner bases concept on polynomial rings over Noetherian rings based on $S$-polynomial concept. The concept of Gr\"{o}bner bases on polynomial rings over arbitrary rings is an active area in commutative algebra as it has many applications. See~\cite{kan88}, \cite{adams1994introduction}, \cite{yengui2006dynamical}, \cite{kacem2010dynamical}, \cite{gamanda2019syzygy}, \cite{EH2021}.

The concept of Rees algebra $R[It]$ is an important topic in commutative algebra as it encodes asymptotic behavior of the ideal $I$. Let $s_1,\dots,s_n$ be generators of the ideal $I$. We define the epimorphism $\phi$ from the polynomial ring $S=R[T_1,\dots,T_n]$ to the Rees algebra $R[It]$ by sending $T_i$ to $s_it$. Then $R[It]\cong S/\ker(\phi)$. The generating set of $\ker(\phi)$ (resp. $\ker(\phi)$) is referred to as the defining equations (resp. the defining ideal) of the Rees algebra $R[It]$. One can generalize these concepts and define the multi-Rees algebra $R[I_1t_1,\dots,I_rt_r]$. Then we will have a similar $\phi$ and a similar defining ideal. Again, the generating set of $\ker(\phi)$ is referred to as the defining equations of the multi-Rees algebra $R[I_1t_1,\dots,I_rt_r]$. Actually, the concept of the multi-Rees algebra $R[I_1t_1,\dots,I_rt_r]$ is the same as the Rees algebra of $R$-module $I_1\oplus\dots\oplus I_r$, denoted by $\mathcal{R}_R(I_1\oplus\dots\oplus I_r)$. As we know from~\cite[Theorem 1.4]{eisenbud2003rees} that if ideals $I_i$ in the commutative ring $R$ are finitely generated, then we have
$$
\mathcal{R}_R(I_1\oplus\dots\oplus I_r)\cong R[I_1t_1,\dots,I_rt_r].
$$

Some recent works about the Rees algebra of an ideal and the multi-Rees algebra of ideals include~\cite{VW91}, \cite{MU96}, \cite{MP13}, \cite{KLU2017}, \cite{R99}, \cite{LP14}, \cite{Sosa14}, \cite{jabarnejad2016rees}, \cite{coxlinsosa}.

When we have a polynomial ring over a field $k$ as $R=k[x_1,\dots,x_n]$ and $I_1,\dots,I_r$ are ideals of $R$, then using $\textit{MACAULAY2}$ one may compute the defining equations of the multi-Rees algebra $R[I_1t_1,\dots,I_rt_r]$. Actually in $\textit{MACAULAY2}$ this can be done for some specific fields. The Rees algebra package is explained in~\cite{eisenbud2018reesalgebra}. Here we briefly explain the algorithm. Let $R$ be a Noetherian ring. Suppose that the $R$-module $M$ is generated by $s_1,\dots,s_m$. Then we consider the presentation matrix $\Phi_{m\times l}$ of $M$ associated to the generating set $\{s_1,\dots,s_m\}$. We consider the row matrix $T=[T_1,T_2,\dots,T_m]$. By $I_1(T.\Phi)$ we mean the ideal generated by entries of the matrix $T.\Phi$. Now if $M$ contains an element $f$ which is a nonzero divisor, and  $M[f^{-1}]$ is a free module, then the defining ideal of the Rees algebra $\mathcal{R}_R(M)$ is equal to    
$$
I_1(T.\Phi):f^\infty.
$$

Now we convert this problem to the multi-Rees algebra $R[I_1t_1,\dots,I_rt_r]$. Suppose $I_1,\dots,I_r$ are ideals of a Noetherian ring $R$. We want to compute the defining equations of the Rees algebra $\mathcal{R}_R(I_1\oplus\dots\oplus I_r)$. Assume that $\{s_1,\dots,s_m\}$ is a generating set of the $R$-module $I_1\oplus\dots\oplus I_r$. Also, $\Phi_{m\times l}$ is the presentation matrix of $I_1\oplus\dots\oplus I_r$ which is associated to the generating set $\{s_1,\dots,s_m\}$. If we consider the row matrix $T=[T_1,T_2,\dots,T_m]$, then the entries of the matrix $T.\Phi$ in some sense are first syzygies of the $R$-module $I_1\oplus\dots\oplus I_r$. Now if every ideal $I_i$ contains an element $f_i$ which is a nonzero divisor, then $(I_1\oplus\dots\oplus I_r)[(f_1 ... f_r)^{-1}]$ is a free module. As it is explained above, under this condition (i.e. $f_i$ are nonzero divisors), we see that the defining ideal of the multi-Rees algebra is equal to    
$$
I_1(T.\Phi):(f_1 ... f_r)^\infty.
$$
In other words, if we saturate the ideal of first syzygies of $I_1\oplus\dots I_r$ by powers of an specific element which is a nonzero divisor, then we get the defining ideal of the multi-Rees algebra $R[I_1t_1,\dots,I_rt_r]$. Then using the concept of Gr\"{o}bner bases in rings that these bases are computable, we may compute the defining equations. The papers~\cite{coxlinsosa}, \cite{KSJ2019}, are in the same direction. In~\cite{coxlinsosa}, authors discuss the multi-Rees algebra $k[x_1,\dots,x_n][I_1t_1,\dots,I_rt_r]$ ($k$ is a field), where $I_i$ are monomial ideals of $k[x_1,\dots,x_n]$. In~\cite{KSJ2019}, authors argue the multi-Rees algebra $k[x_1,\dots,x_n][I_1t_1,\dots,I_rt_r]$ ($k$ is a field), where $I_i$ are arbitrary ideals of $k[x_1,\dots,x_n]$. As wee see in all mentioned papers all ideals contain nonzero divisors and to get the defining ideal of the multi-Rees algebra they saturate either all first syzygies or some of first syzygies of the module $I_1\oplus\dots\oplus I_r$. However when we discuss the multi-Rees algebra $R[x_1,\dots,x_n][I_1t_1,\dots,I_rt_r]$ for a Noetherian ring $R$ ($I_i$ are ideals of $R[x_1,\dots,x_n]$) and some of $I_i$ do not contain any nonzero divisors we cannot get the defining ideal by saturating first syzygies of the $R[x_1,\dots,x_n]$-module $I_1\oplus\dots\oplus I_r$. In some sense, this is the subject of our paper. In fact, in such a case to get the defining ideal we need to saturate more polynomials which are not just first syzygies and actually some of them are not linear in $T$. See Examples~\ref{counter1}, \ref{counter2}, \ref{counter3}. Actually in Example~\ref{counter3}, we prove this claim, where the ground ring is a principal ideal ring and we have zero divisors and the ideal does not contain any nonzero divisors. Please note that the first syzygies of a finitely generated module over a polynomial ring over a principal ideal ring is given in~\cite{gamanda2019syzygy}, \cite{BJGR}. For example to observe Example~\ref{counter3}, one may look at~\cite[Section 5]{BJGR}, to see how syzygies look like. 

In the present paper we introduce a formula for the defining ideal of the multi-Rees algebra of ideals for certain rings when some of ideals may not contain nonzero divisors. Actually we show that the defining ideal is still equal to a saturated ideal but we saturate more polynomials and not just first syzygies. We develop this formula for the multi-Rees algebra $R[x_1,\dots,x_n][I_1t_1,\dots,I_rt_r]$, where $R$ is a principal ideal ring, and $I_j$ are ideals of $R[x_1,\dots,x_n]$. By~\cite[Lemma 10, Corollary 11]{hungerford1968structure}, every principal ideal ring is a finite direct sum of quotients of Principal Ideal Domains (PIDs). Therefore, every principal ideal ring is isomorphic to $\oplus_{i=1}^{e}R_i/N_iR_i$, where $R_i$ is a principal ideal domain and $N_i$ is either zero or $N_i=p_i^{n_i}$, where $p_i$ is a prime element in $R_i$. Then the multi-Rees algebra $R[x_1,\dots,x_n][I_1t_1,\dots,I_rt_r]$ is isomorphic to the finite direct sum of multi-Rees algebras of the corresponding ideals on $R_i/N_iR_i[x_1,\dots,x_n]$. As a result, it is enough to build this formula for multi-Rees algebras on corresponding ideals in $R_i/N_iR_i[x_1,\dots,x_n]$. 

Therefore, this paper contains two main sections, which are Sections~\ref{pir-section} and \ref{pid-section}. In Section~\ref{pir-section}, which concludes the main contribution of this paper, we give a formula for the defining ideal of the multi-Rees algebra $R/p^mR[x_1,\dots,x_n][I_1t_1,\dots,I_rt_r]$, where $R$ is a PID, $p$ is a prime element in $R$, $m\ge 2$ and $I_j$ are ideals of $R/p^mR[x_1,\dots,x_n]$. As we see in this case some of ideals may not contain nonzero divisors. So that to get the defining ideal of the multi-Rees algebra we saturate some explicit polynomials, in which some of them are not among first syzygies of the module $I_1\oplus\dots\oplus I_r$. As we said before, in Example~\ref{counter3}, we show that if we only saturate first syzygies, then we cannot obtain the defining ideal of the multi-Rees algebra. Please notice that if $m=1$, then $R/pR$ is a field. In this section, we introduce an algorithm and using it we show that in this case the defining ideal is equal to a saturated ideal of some explicit equations in $T$, where some of them are not linear in $T$. 

In Section~\ref{pid-section}, which is a minor contribution of this paper, we give a formula for the multi-Rees algebra $R[x_1,\dots,x_n][I_1t_1,\dots,I_rt_r]$, where $R$ is a PID and $I_j$ are ideals of $R[x_1,\dots,x_n]$. To do this part, again we introduce another algorithm. One may wonder why we are doing this case while, according to the explained method in~\cite{eisenbud2018reesalgebra}, the defining ideal is equal to
$$
I_1(T.\Phi):(f_1 ... f_r)^\infty\quad \text{"explained above"},
$$
and the entire $I_1(T.\Phi)$ is saturated, which is the ideal generated by all first syzygies. To address this concern, we show that in this minor section of the paper, we don't need to saturate all first syzygies. Instead, we only need to saturate some of the Koszul relations. 

Both mentioned algorithms in Sections~\ref{pir-section} and \ref{pid-section}, are modifications of the division algorithm, where we don't put order on all variables. Finally, using elimination order and Gr\"{o}bner bases concepts on polynomial rings over principal ideal rings, we can compute defining equations of the multi-Rees algebra. Actually, in this paper we explain how to use elimination order to compute Gr\"{o}bner basis of the defining ideal. Therefore, one may see this paper as an application of Gr\"{o}bner bases concept on polynomial rings over principal ideal rings. Please be notified that the concept of Gr\"{o}bner bases on polynomial rings over principal ideal rings is discussed in~\cite{gamanda2019syzygy}, \cite{BJGR}. 
 
At the end, as an special case, we consider the multi-Rees algebra 
$$
R/p^mR[x_1,\dots,x_n][I_1t_1,\dots,I_rt_r],
$$ 
where $R$ is a principal ideal domain, $p$ is a prime element of $R$, and $I_j$ are term ideals of $R/p^mR[x_1,\dots,x_n]$ (ideals that are generated by polynomials with one term). In this special case we show that the defining ideal is equal to
$$
\mathcal{L}:(x_1 ... x_n)^\infty,
$$
where the ideal $\mathcal{L}$ is generated by some explicit polynomials.

Similarly, as an special case, we consider the multi-Rees algebra 
$$
R[x_1,\dots,x_n][I_1t_1,\dots,I_rt_r],
$$
where $R$ is a PID, and $I_j$ are term ideals of $R[x_1,\dots,x_n]$. In this special case we show that the defining ideal is equal to 
$$
\mathcal{L}:(p_1 ... p_lx_1 ... x_n)^\infty,
$$
where the ideal $\mathcal{L}$ is generated by some explicit polynomials. Also, $p_i$ are some distinct prime factors of coefficients of generators of $I_j$.

Also, we give some examples, in which we compute defining equations of the multi-Rees algebra.

\section{Elimination order}

In this section we state elimination order concept for the ring $A[\mathbf{x}]$, where $A$ is a Noetherian ring and $\mathbf{x}=x_1,\dots,x_n$. This fact is a natural generalization of field case which could be seen easily. But to our knowledge it is not officially stated in the literature, then we state it here.

\begin{definition}
Let $I$ be an ideal $A[\mathbf{x}]$ which has a monomial order. Suppose $\{g_1,...,g_m\}\subseteq I$. We say $\{g_1,...,g_m\}$ is a Gr\"{o}bner basis for $I$ if $\langle \LT(g_1),...,\LT(g_m)\rangle=\LT(I)$, where $\LT(I)$ is the ideal generated by the leading terms of elements of $I$. 
\end{definition}

From~\cite[Corollary 4.1.15]{adams1994introduction}, we see that if $I$ is an ideal of $A[\mathbf{x}]$ and $G=\{g_1,...,g_m\}$ is a Gr\"{o}bner basis for $I$, then $G$ generates $I$. The concept of elimination order for $A[\mathbf{x}]$ is similar to the field case. We have similar result to \cite[Theorem 3.3]{ene2011gr} with a similar proof.

\begin{lemma}\label{elimination}
Let $I\subseteq A[\mathbf{x}]$ be an ideal and $1\le t\le n$ an integer. If $G$ is a Gr\"{o}bner basis of $I$ with respect to some elimination order $\prec$ for $x_1,\dots,x_t$, then $G_t=G\cap A[x_{t+1},\dots,x_n]$ is a Gr\"{o}bner basis of $I_t=I\cap A[x_{t+1},\dots,x_n]$ with respect to the induced order on the subring $A[x_{t+1},\dots,x_n]$.
\end{lemma}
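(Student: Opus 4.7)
The plan is to imitate the standard field-case argument, keeping track of what changes when the coefficient ring is a Noetherian ring $A$ rather than a field. The key feature of an elimination order for $x_1,\dots,x_t$ that I will use is: every monomial involving at least one of $x_1,\dots,x_t$ is strictly larger than every monomial in $A[x_{t+1},\dots,x_n]$; equivalently, if $\LM(h)$ involves none of $x_1,\dots,x_t$, then $h\in A[x_{t+1},\dots,x_n]$.

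First I would record the easy inclusion $G_t\subseteq I_t$ and the observation that, for $g\in G_t$, the leading term of $g$ with respect to $\prec$ coincides with the leading term with respect to the induced order, since $g$ has no terms involving $x_1,\dots,x_t$. The substantive step is to show that for every nonzero $f\in I_t$ the leading term $\LT(f)$ lies in the ideal of $A[x_{t+1},\dots,x_n]$ generated by $\{\LT(g):g\in G_t\}$. Since $f\in I$ and $G$ is a Gröbner basis of $I$, one can write
$$
\LT(f)\;=\;\sum_{g\in G} h_g\,\LT(g),\qquad h_g\in A[\mathbf{x}].
$$
Split each coefficient $h_g=h_g^{(0)}+h_g^{(1)}$, where $h_g^{(0)}$ gathers those terms of $h_g$ lying in $A[x_{t+1},\dots,x_n]$ and $h_g^{(1)}$ gathers the rest. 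For any $g\in G\setminus G_t$, the elimination property forces $\LM(g)$ to involve some $x_i$ with $i\le t$, so every term of $h_g\LT(g)$ involves such an $x_i$; the same is true of $h_g^{(1)}\LT(g)$ when $g\in G_t$. Because $\LT(f)\in A[x_{t+1},\dots,x_n]$, projecting the identity onto the $A[x_{t+1},\dots,x_n]$-part of $A[\mathbf{x}]$ (i.e.\ discarding all terms that involve some $x_i$ with $i\le t$) yields
$$
\LT(f)\;=\;\sum_{g\in G_t} h_g^{(0)}\,\LT(g),
$$
which is precisely the membership required. This gives the Gröbner-basis property for $G_t$ with respect to the induced order.

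The main potential obstacle, compared with the field case, is that divisibility of terms in $A[\mathbf{x}]$ is nontrivial: one cannot simply "cancel" or "divide" terms as in the field setting. The argument above sidesteps this by working with the full ideal-membership identity for $\LT(f)$ rather than a single divisibility relation, and by noting that the elimination-order property prevents any term of $h_g\LT(g)$ that involves $x_1,\dots,x_t$ from contributing to the $A[x_{t+1},\dots,x_n]$-component after cancellation. Once this monomial bookkeeping is done, the proof is essentially identical to \cite[Theorem 3.3]{ene2011gr}, and the fact that a Gröbner basis generates its ideal (as noted via \cite[Corollary 4.1.15]{adams1994introduction}) automatically yields $G_t$ as a generating set of $I_t$.
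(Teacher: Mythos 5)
Your proposal is correct and follows essentially the same route as the paper: both reduce the claim to showing $\LT(f)\in\langle\LT(g):g\in G_t\rangle$ for $f\in I_t$ by writing $\LT(f)$ as a combination of the $\LT(g_j)$ and then discarding every summand whose monomial involves some $x_i$ with $i\le t$, using the elimination property to see that all contributions from $G\setminus G_t$ are killed. Your version merely makes the bookkeeping slightly more explicit (general polynomial coefficients split as $h_g^{(0)}+h_g^{(1)}$, versus the paper's expansion into terms $r_jh_j$), which is a presentational difference, not a mathematical one.
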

\begin{proof}
Let $G=\{g_1,\dots,g_m\}$ and $G_t=\{g_1,\dots,g_s\}$. We show that $\langle\LT(g_1),\dots,\LT(g_s)\rangle=\LT(I_t)$. First of all, by the choice of monomial order $\LT(g_j)\notin A[x_{t+1},\dots,x_n]$ for all $j>s$. Let $f\in I_t$ be a nonzero polynomial. Then $\LT(f)=\sum_{j=1}^{m}r_jh_j\LT(g_j)$, where $r_j\in R$ and $h_j$ are monomials. But variables of $\LT(f)$ are between $x_{t+1},\dots,x_n$. Hence, $\LT(f)=\sum_{j=1}^{s}r_jh_j\LT(g_j)$.
\end{proof}

\begin{definition}
A ring $A$ is solvable when given $a,a_1,\dots,a_m\in A$, we can determine whether $a\in\langle a_1,\dots,a_m\rangle$ and if it is, then we can find $b_1,\dots,b_m\in A$ such that $a=a_1b_1+\dots+a_mb_m$. 
\end{definition}

\begin{remark}
If $A$ is solvable, then $A/\alpha A$ ($\alpha\in A$) is solvable. Because determining whether $a+\alpha A\in\langle a_1+\alpha A,\dots,a_k+\alpha A\rangle$ and finding solutions are equivalent to determining whether $a\in\langle a_1,\dots,a_k,\alpha\rangle$ and finding solutions.  
\end{remark}

Let $A$ be a solvable PID. Then Gr\"{o}bner basis on polynomial rings over $A/\alpha A$ is computable~\cite{gamanda2019syzygy}. Note that every Euclidean domain is solvable. Please notice that in~\cite{gamanda2019syzygy}, authors consider discrete rings, which means rings whose equality is decidable. But whenever it is possible we disregard this condition, as we are not dealing with constructive mathematics.

\section{Defining ideal of the multi-Rees algebra when the ground ring is $R/p^mR$}\label{pir-section}

In this section we fix ideals $I_1,\dots,I_r$ of $R/p^mR[\mathbf{x}]$, where $\mathbf{x}=x_1,\dots,x_n$, $R$ is a PID, $p$ is a prime element of $R$ and $m\ge 2$. Please notice that when $m=1$, the ground ring is a field. We denote a fixed generating set of the ideal $I_j$ by $\gen(I_j)$. Let $\{f_1,\dots,f_q\}=\bigcup_{i=1}^{r}\gen(I_i)$.

In $R/p^mR$, every nonzero element can be shown uniquely as $vp^n+p^mR$, where $n<m$ and $p$ does not divide $v$. Because if $up^{n_1}+p^mR=vp^{n_2}+p^mR$, then $up^{n_1}-vp^{n_2}=lp^{m}$. Without loss of generality we may assume $n_1<n_2$. Hence $u-vp^{n_2-n_1}=lp^{n_3}$, $n_3>0$. Then $p$ divides $u$, which is a contradiction. Clearly $up^{n_1}+p^mR$ divides $vp^{n_2}+p^mR$ iff $n_1\le n_2$. In such a case we write 
$$
\frac{vp^{n_2}+p^mR}{up^{n_1}+p^mR}=vu^{-1}p^{n_2-n_1}+p^mR.
$$

\begin{proposition}\label{nonzero-divisor}
Let $f\in R/p^mR[\mathbf{x}]$. Suppose at least one term of $f$ is a nonzero divisor and $0\neq g\in R/p^mR[\mathbf{x}]$. If $p^n+p^mR\mid g$ and $n$ is maximum, then $p^{n+1}+p^mR\nmid fg$.
\end{proposition}
\begin{proof}
Let $f=(r_1+p^mR)f_1+\dots+(r_k+p^mR)f_k+\dots+(r_t+p^mR)f_t$, where $f_j$ are monomials. Without loss of generality we may assume $r_1+p^mR,\dots,r_k+p^mR$ are nonzero divisors and $r_{k+1}+p^mR,\dots,r_t+p^mR$ are zero divisors. Note that by the assumption $k\ge 1$. We see that $p\mid r_j$ for $k+1\le j\le t$. Let $g=(s_1+p^mR)g_1+\dots+(s_l+p^mR)g_l$ ($g_j$ are monomials) and $p^{n+1}+p^mR\mid fg$. We have $p^n\mid s_j$ for $1\le j\le l$ and $n$ is maximum. Let $\overline{f}=r_1f_1+\dots+r_tf_t$ and $\overline{g}=s_1g_1+\dots+s_lg_l$. Then we have $\overline{g}=p^ng^{'}$ and $n$ is maximum. We have
\begin{gather*}
((r_1f_1+\dots+r_kf_k)+(r_{k+1}f_{k+1}+\dots+r_tf_t))\overline{g}-p^{n+1}a=p^{m}b\\
\Rightarrow ((r_1f_1+\dots+r_kf_k)+(r_{k+1}f_{k+1}+\dots+r_tf_t))\overline{g}=p^{n+1}h\\
\Rightarrow ((r_1f_1+\dots+r_kf_k)+(r_{k+1}f_{k+1}+\dots+r_tf_t))g^{'}=ph\\
\Rightarrow (r_1f_1+\dots+r_kf_k)g^{'}+pa=ph\Rightarrow p\mid g^{'},
\end{gather*}
which is a contradiction.
\end{proof}

\begin{corollary}\label{nonzero-divisor1}
Let $f\in R/p^mR[\mathbf{x}]$. If at least one term of $f$ is a nonzero divisor, then $f$ is a nonzero divisor.
\end{corollary}

Let 
$$
S:=R/p^mR[\mathbf{x}]\left[\{T_{k,j}\}_{1\le j\le r,f_k\in\gen(I_j)}\right].
$$ 
We define and fix the $R/p^mR[\mathbf{x}]$-algebra epimorphism 
$$
\phi : S \to R/p^mR[\mathbf{x}][I_1t_1,\dots, I_rt_r], \ \text{by} \ \phi(T_{k,j})=f_kt_j,
$$
where $R/p^mR[\mathbf{x}][I_1t_1,\dots, I_rt_r]$ is the multi-Rees algebra of ideals $I_j$. Let $\mathscr{L}=\ker(\phi)$. In the rest of this section $\mathbf{T}$ is the set of defined $T_{k,j}$.

\begin{definition}
If $f\in R/p^mR[\mathbf{x},\mathbf{T}]$, then we say $f$ is reduced if the greatest common divisor of its coefficients is 1. 
\end{definition}

\begin{definition}
For every $f\in R/p^mR[\mathbf{x},\mathbf{T}]$, we factor out the greatest common divisor of its coefficients and we denote the remaining polynomial by $f_{\red}$. We also denote the greatest common divisor of coefficients of $f$ by $f_{\gcd}$. Please note that $f=f_{\gcd}f_{\red}$, and $f_{\red}$ and $f_{\gcd}$ are unique up to unit.
\end{definition} 

In this section, we fix the lexicographic order $\prec_1$ in $R/p^mR[\mathbf{x},\mathbf{T}]$ on $T$ monomials as follows: If $j_1<j_2$, then $T_{k_1,j_1}\prec_1 T_{k_2,j_2}$. For any $j$ we say $T_{k_1,j}\prec_1 T_{k_2,j}$, if we have one of the following conditions: 1. $f_{{k_1}_{\gcd}}\mid f_{{k_2}_{\gcd}}$ and $f_{{k_2}_{\gcd}}\nmid f_{{k_1}_{\gcd}}$. 2. If $f_{{k_1}_{\gcd}}\mid f_{{k_2}_{\gcd}}$, $f_{{k_2}_{\gcd}}\mid f_{{k_1}_{\gcd}}$, then $k_1<k_2$. Note that when we put terms of a polynomial in descending order, coefficients are polynomials in $x$. Note that since we have not put any order on $x$ then leading coefficient of $f\in R/p^mR[\mathbf{x},\mathbf{T}]$ (denoted by $\LC(f)$) is a polynomial in $x$. Finally, we denote the leading monomial of $f$ by $\LM(f)$, which is a monomial in $T$. Clearly the leading term of $f$ (denoted by $\LT(f)$) is equal to $\LC(f)\LM(f)$. 

\begin{theorem}\label{division-algorithm-2}
Let $F=(f_1,\dots,f_t)$ be an ordered $t$-tuple of polynomials in $R/p^mR[\mathbf{x},\mathbf{T}]$, and let $f$ be a polynomial in $R/p^mR[\mathbf{x},\mathbf{T}]$. Consider the following algorithm:
	
1. $g_1=g_2=\dots=g_t=0$, $s=0$, $g=f$, and $h=f$.
	
2. Find the smallest $j\in\{1,\dots,t\}$ such that $\LC(g)_{\gcd}\LM(g)$ is a multiple of $\LC(f_j)_{\gcd}\LM(f_j)$. If such a $j$ exists, replace $g_j$ by $\LC(f_j)_{\red}\left(g_j+\frac{\LT(g)}{\LT(f_j)}\right)$, replace other $g_k$ by $\LC(f_j)_{\red}g_k$. Finally, replace $g$ by 
$$
\LC(f_j)_{\red}\left(g-\frac{\LT(g)}{\LT(f_j)}f_j\right),
$$ 
and $s$ by $\LC(f_j)_{\red}s$. Also, replace $h$ by $\LC(f_j)_{\red}h$.
	
3. Repeat step 2 until there is no more $j\in\{1,\dots,t\}$ such that $\LC(g)_{\gcd}\LM(g)$ is a multiple of $\LC(f_j)_{\gcd}\LM(f_j)$. Then replace $s$ by $s+\LT(g)$ and $g$ by $g-\LT(g)$.
	
4. If now $g\neq 0$, start again with step 2. If $g=0$ stop.
	
This is an algorithm which gives us $t$-tuple $(g_1,\dots,g_t)\in R[\mathbf{x},\mathbf{T}]^t$ and the polynomial $s\in R[\mathbf{x},\mathbf{T}]$ such that
$$
af=h=g_1f_1+\dots+g_tf_t+s,
$$
and such that the following conditions are satisfied.
	
a. If $s\neq 0$, then none of terms of $s$ is divisible by any of $\LC(f_1)_{\gcd}\LM(f_1),\dots,\LC(f_t)_{\gcd}\LM(f_t)$.
	
b. $a$ is a product of $\LC(f_j)_{\red}$.  
\end{theorem}
\begin{proof}
We see that the algorithm terminates in finitely many steps, because in each step the leading term becomes strictly smaller. For divisibility, since the leading term of $g$ in each step becomes strictly smaller, when we replace $s$ by $s+\LT(g)$, $\LT(g)$ does not combine with other terms of $s$. So that in step 3 the power of $p+p^mR$ is not increased in terms of $s$. On the other hand, by induction we assume that in a step none of terms of $s$ is divisible by any of $\LC(f_1)_{\gcd}\LM(f_1),\dots,\LC(f_t)_{\gcd}\LM(f_t)$. When we replace $s$ by $\LC(f_j)_{\red}s$, if a term of $\LC(f_j)_{\red}s$ is divisible by, say, $\LC(f_1)_{\gcd}\LM(f_1)$, then corresponding term in $s$, say, $s_l$ is divisible by $\LM(f_1)$, so that it is not divisible by $\LC(f_1)_{\gcd}$. Let $p^n+p^mR$ be the greatest factor of $s_l$. By Proposition~\ref{nonzero-divisor}, $p^{n+1}+p^mR\nmid\LC(f_j)_{\red}s_l$. Hence $\LC(f_1)_{\gcd}\nmid\LC(f_j)_{\red}s_l$, which is a contradiction.  
\end{proof}

Now we want to show that the defining ideal (i.e. $\ker(\phi)=\mathscr{L}$) of the multi-Rees algebra $R/p^mR[\mathbf{x}][I_1t_1,\dots, I_rt_r]$ is a saturated ideal of some explicit equations. But as we said in the introduction these explicit equations are not just first syzygies of the $R/p^mR[\mathbf{x}]$-module $I_1\oplus\dots\oplus I_r$. We fix the set

\begin{gather*}
H=\{p^n+p^mRT_{{k_1},{j_1}}^{n_1}\dots T_{{k_s},{j_s}}^{n_s}; f_{{k_t}_{\gcd}}\ \text{is not a unit}, 1\le t\le s,\\
\forall T_{k,{j_t}}, (k\neq k_t), T_{k_t,j_t}\prec_1 T_{k,j_t}, 0\le n,n_t, n+n_1+\dots+n_s=m\}.\\
\end{gather*}
It is clear that $p^n+p^mRT_{{k_1},{j_1}}^{n_1}\dots T_{{k_s},{j_s}}^{n_s}\in\mathscr{L}$.

Let $p^\alpha+p^mRT_{{k_1},{j_1}}^{\beta_1}\dots T_{{k_s},{j_s}}^{\beta_s}\neq 0\in\mathscr{L}$. Then clearly it is a multiple of an element of $H$. 

In this section by $\langle\dots\rangle$ we mean the ideal generated by elements inside the brackets. In particular for a set $A$, by $\langle A\rangle$, we mean the ideal generated by all elements of the set $A$.

For every $j$, let $T_{k_j,j}$ be the smallest $T$ under $\prec_1$. Then, in this section, we fix the ideal 
\begin{gather*}
\mathcal{L}=\langle\{(f_{k_j}T_{k,j}-f_kT_{k_j,j})_{\red};1\le j\le r,f_k\in I_j\}\rangle+\langle H\rangle.
\end{gather*}

\begin{theorem}\label{equations2}
	If
	$$
	\mathcal{E}=\mathcal{L}:\left(\prod(f_{k_j})_{\red}\right)^\infty,
	$$
	then $\mathscr{L}=\mathcal{E}$.
\end{theorem}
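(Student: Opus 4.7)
The plan is to establish the two inclusions $\mathcal{E}\subseteq\mathscr{L}$ and $\mathscr{L}\subseteq\mathcal{E}$ separately.

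For $\mathcal{E}\subseteq\mathscr{L}$, I would first verify $\mathcal{L}\subseteq\mathscr{L}$ by checking each listed generator vanishes under $\phi$. The element $(f_{k_j}T_{k,j}-f_kT_{k_j,j})_{\red}$ differs from $f_{k_j}T_{k,j}-f_kT_{k_j,j}$ only by a scalar, and
$$
\phi(f_{k_j}T_{k,j}-f_kT_{k_j,j})=f_{k_j}f_kt_j-f_kf_{k_j}t_j=0.
$$
For $h=p^nT_{k_1,j_1}^{n_1}\cdots T_{k_s,j_s}^{n_s}\in H$, writing $f_{k_t}=p^{\ell_t}(f_{k_t})_{\red}$ with $\ell_t\geq 1$ (since $\LC(f_{k_t})$ is not a unit) gives $\phi(h)=p^{n+\sum\ell_tn_t}\prod(f_{k_t})_{\red}^{n_t}t^{\nu}$, and the hypothesis $n+n_1+\cdots+n_s=m$ forces $n+\sum\ell_tn_t\geq m$, so $\phi(h)=0$. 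Next, each $(f_{k_j})_{\red}$ has coefficient gcd equal to $1$, hence some coefficient is a unit; Corollary~\ref{nonzero-divisor1} then says it is a nonzero divisor in $R[\mathbf{x}]$, and it remains so in the multi-Rees algebra because the latter embeds in the polynomial ring $R[\mathbf{x}][t_1,\dots,t_r]$. Hence $\mathscr{L}$ is saturated with respect to $\prod(f_{k_j})_{\red}$, yielding $\mathcal{E}\subseteq\mathscr{L}$.

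For $\mathscr{L}\subseteq\mathcal{E}$, given $f\in\mathscr{L}$ I would run Theorem~\ref{division-algorithm-2} with the generators of $\mathcal{L}$ as divisors to obtain
$$
af=\sum g_ih_i+s,
$$
where $a$ is a product of $\xLC(h_i)$'s. For an $S$-polynomial-type generator $(f_{k_j}T_{k,j}-f_kT_{k_j,j})_{\red}$ the leading $T$-monomial is $T_{k,j}$ with $\xLC=(f_{k_j})_{\red}$, while for $h\in H$ the leading $x$-coefficient is a pure $p$-power, so $\xLC(h)=1$. Thus $a$ divides some power of $\prod(f_{k_j})_{\red}$, and it suffices to prove $s=0$, for then $af\in\mathcal{L}$ and $f\in\mathcal{E}$.

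To show $s=0$, observe that $s=af-\sum g_ih_i\in\mathscr{L}$. Theorem~\ref{division-algorithm-2}(a) says no term of $s$ is divisible by any $\LC(h)\LM(h)$. Non-divisibility by $\LC\LM=T_{k,j}$ for the $S$-polynomial generators forces $s$ to involve only the smallest variables $T_{k_j,j}$, so $s=\sum_\nu c_\nu(\mathbf{x})T_{k_1,1}^{n_1}\cdots T_{k_r,r}^{n_r}$. Since distinct $\nu$ map to distinct $t$-monomials under $\phi$, the equation $\phi(s)=0$ yields $c_\nu\prod f_{k_j}^{n_j}=0$ for each $\nu$; repeated application of Proposition~\ref{nonzero-divisor} discards the $(f_{k_j})_{\red}$ factors and reduces this to $c_\nu\cdot p^{\sum\ell_{k_j}n_j}=0$, giving a lower bound on the $p$-adic valuation of every $\mathbf{x}$-coefficient of $c_\nu$. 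The main obstacle—and the step I expect to require the most care—is to combine this lower bound with the upper bound supplied by non-divisibility of the terms of $c_\nu T^\nu$ by the elements of $H$, and argue that the two ranges are compatible only when every coefficient of $c_\nu$ is zero. Once this is carried out for each $\nu$, $s=0$ and the inclusion $\mathscr{L}\subseteq\mathcal{E}$ follows.
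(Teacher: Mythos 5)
Your proposal reproduces the paper's own argument almost step for step: the inclusion $\mathcal{E}\subseteq\mathscr{L}$ via the fact that $\prod(f_{k_j})_{\red}$ is a nonzero divisor on the multi-Rees algebra, and the reverse inclusion by running Theorem~\ref{division-algorithm-2} against the generators of $\mathcal{L}$, identifying $a$ as a product of the $\xLC$'s (hence of the $(f_{k_j})_{\red}$'s), using conclusion (a) to confine the $T$-variables of $s$ to the smallest ones $T_{k_j,j}$, and reducing to the vanishing of each term of $s$ separately. All of that is correct and is exactly what the paper does. The gap is the step you yourself flag and then do not carry out, and it is precisely the point where the paper closes the proof by invoking the observation stated immediately after the definition of $H$: every nonzero monomial $(p^\alpha+p^mB)\,T_{k_1,j_1}^{\beta_1}\cdots T_{k_s,j_s}^{\beta_s}$ lying in $\mathscr{L}$ is a multiple of an element of $H$.

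Granting that observation, the finish is short: writing a term of $s$ as $(p^v+p^mB)\,g_{\mathbf{x}}\,T_{k_1,1}^{n_1}\cdots T_{k_r,r}^{n_r}$ with $g_{\mathbf{x}}$ reduced, your identity $c_\nu\prod f_{k_j}^{n_j}=0$ together with repeated use of Proposition~\ref{nonzero-divisor} (to strip off the nonzero divisors $g_{\mathbf{x}}$ and $(f_{k_j})_{\red}$) gives $(p^v+p^mB)\prod\LC(f_{k_j})^{n_j}=0$, i.e.\ the monomial $(p^v+p^mB)\,T_{k_1,1}^{n_1}\cdots T_{k_r,r}^{n_r}$ lies in $\mathscr{L}$; if it were nonzero it would be a multiple of some $h\in H$, and since $\LC(h)\LM(h)=h$ for every $h\in H$, this contradicts conclusion (a) of Theorem~\ref{division-algorithm-2}. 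Hence every term of $s$ vanishes and $s=0$. Be aware that the supporting observation is itself only asserted (``clearly'') in the paper: making it self-contained requires exactly the valuation comparison you anticipated, namely that $v+\sum\ell_{k_j}n_j\ge m$ (where $\LC(f_{k_j})=p^{\ell_{k_j}}$) yields exponents $n\le v$ and $n_t\le n_{j_t}$ with $n+\sum n_t=m$, and this needs an argument when some $\ell_{k_j}\ge 2$. So either cite the paper's observation about $H$ or supply that combinatorial check; as written, your proof is not complete at the one step that carries the real content of the theorem.
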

\begin{proof}
	If $f\in\mathcal{E}$, then 
	\begin{gather*}
	\left(\prod(f_{k_j})_{\red}\right)^lf\in\mathcal{L}.
	\end{gather*}
	If we substitute $f_kt_j$ for $T_{k,j}$ we have
	$$
	\left(\prod(f_{k_j})_{\red}\right)^lf(f_kt_j)=0. 
	$$
	But 
	$$
	\left(\prod(f_{k_j})_{\red}\right)^l
	$$ 
	is a nonzero divisor. Then $f(f_kt_j)=0$. Hence $f\in\mathscr{L}$ . Now we prove the other direction.
	
	Let $h_i$ ($1\le i\le v$) be mentioned generators of $\mathcal{L}$ and let $f\in\mathscr{L}$. Then by Theorem~\ref{division-algorithm-2}, we have
	$$
	\left(\prod(f_{k_j})_{\red}\right)^lf=g_1h_1+\dots+g_vh_v+s.
	$$ 
	If $s\neq 0$, then none of its terms is divisible by any of $\LC(h_i)_{\gcd}\LM(h_i)$. Note that, with regard to the order that we have, terms of $s$ are in the form of $ab$, where $a$ is a polynomial in $R/p^mR[\mathbf{x}]$ and $b$ is a monomial in $T_{k,j}$. We see that when we substitute $f_kt_j$ for $T_{k,j}$ in equation above $s$ becomes zero.
	
	But what we have defined, for every $T_{k,j}$ we have $f_{{k_j}_{\gcd}}\mid f_{k_{\gcd}}$. Also, $\LC_{\gcd}(f_{k_j}T_{k,j}-f_kT_{{k_j},j})_{\red}$ is a unit. Hence only these $T_{k_j,j}$ appear in terms of $s$ between $T$ variables.
	
	Then terms of $s$ are in the form $p^v+p^mRg_{\mathbf{x}}T_{{k_1},1}^{m_1}\dots T_{{k_r},r}^{m_r}$, where $g_{\mathbf{x}}$ is a reduced polynomial in $R[\mathbf{x}]$, and it is a nonzero divisor. When we substitute $f_kt_j$ for $T_{k,j}$, these terms individually become zero. Hence by the argument above about $H$, $g_{\mathbf{x}}=0$. Then $s=0$.
\end{proof}

Now we state an elementary result which is well-known in the literature.

\begin{proposition}\label{equations-grobner1}
	Let 
	$$
	\mathcal{F}=\langle y\prod(f_{k_j})_{\red}-1\rangle+\mathcal{L}\subseteq R/p^mR[\mathbf{x},\mathbf{T},y],
	$$
	and
	$$
	\mathcal{E}=\mathcal{L}:\left(\prod(f_{k_j})_{\red}\right)^\infty.
	$$
	Then $\mathcal{E}=\mathcal{F}\cap R/p^mR[\mathbf{x},\mathbf{T}]$.
\end{proposition}
\begin{proof}
	If $f\in\mathcal{E}$, then 
	$$
	\left(\prod(f_{k_j})_{\red}\right)^lf\in\mathcal{L}.
	$$ 
	But
	\begin{gather*}
	1-\left(y\prod(f_{k_j})_{\red}\right)^l=\left(1-y\prod(f_{k_j})_{\red}\right)g\\
	\Rightarrow f=\left(y\prod(f_{k_j})_{\red}\right)^lf+\left(1-y\prod(f_{k_j})_{\red}\right)gf\in\mathcal{F}.
	\end{gather*}
	Then $\mathcal{E}\subseteq\mathcal{F}\cap R[\mathbf{x},\mathbf{T}]$. For the other direction, if $f\in\mathcal{F}\cap R/p^mR[\mathbf{x},\mathbf{T}]$, then
	\begin{gather*}
	f=a\left(1-y\prod(f_{k_j})_{\red}\right)+\sum b_\alpha c_\alpha,
	\end{gather*} 
	where $a,b_\alpha\in R/p^mR[\mathbf{x},\mathbf{T},y]$, and $c_\alpha$ are generators of $\mathcal{L}$. It is enough to replace $y$ by 
	$$
	\frac{1}{\prod(f_{k_j})_{\red}}
	$$ 
	in the equation above, which proceeds to the other direction. Note that in this part we do calculation in the localization of the ring with multiplicative set generated by $\prod(f_{k_j})_{\red}$. 
\end{proof}
\begin{corollary}
	Suppose 
	$$
	\mathcal{F}=\langle y\prod(f_{k_j})_{\red}-1\rangle+\mathcal{L}.
	$$
	Let $\prec$ be an elimination order for $y$ on the ring $R[\mathbf{x},\mathbf{T},y]$ and let $G$ be a Gr\"{o}bner basis for $\mathcal{F}$ with this order. Then $G\cap R/p^mR[\mathbf{x},\mathbf{T}]$ is a Gr\"{o}bner basis for $\mathscr{L}$.
\end{corollary}

By what we have argued so far we can state the following result. Just notice that if $f$ is a term (a term is a polynomial with one term), then $f_{\red}$ is a term whose coefficient is a unit.

\begin{corollary}\label{equations4}
	Let all ideals $I_j$ be generated by terms. If 
	$$
	\mathcal{E}=\mathcal{L}:(x_1 ... x_n)^\infty,
	$$ 
	then $\mathscr{L}=\mathcal{E}$.
\end{corollary}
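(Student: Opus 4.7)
The plan is to leverage Theorem~\ref{equations2}, which already establishes $\mathscr{L}=\mathcal{L}:\bigl(\prod_j (f_{k_j})_{\red}\bigr)^\infty$, and to sandwich $\mathcal{L}:(x_1\cdots x_n)^\infty$ between that saturation and $\mathscr{L}$. The key structural observation, flagged just before the statement, is that when every $I_j$ is generated by terms each $f_k$ is of the form $c_k m_k$, where $c_k\in R$ and $m_k$ is a monomial in $x_1,\dots,x_n$, so $(f_k)_{\red}$ equals $u_k m_k$ for some unit $u_k\in R$. Hence the finite product $\prod_j(f_{k_j})_{\red}$ is, up to a unit, a monomial in $x_1,\dots,x_n$, and therefore divides $(x_1\cdots x_n)^N$ for $N$ large enough.

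With this in hand, the inclusion $\mathscr{L}\subseteq\mathcal{L}:(x_1\cdots x_n)^\infty$ is immediate: from $\bigl(\prod_j(f_{k_j})_{\red}\bigr)^l f\in\mathcal{L}$ we get $(x_1\cdots x_n)^{Nl}f\in\mathcal{L}$ as well. For the reverse inclusion I would mirror the closing paragraph of the proof of Theorem~\ref{equations2}. Take $f\in\mathcal{L}:(x_1\cdots x_n)^\infty$, so $(x_1\cdots x_n)^l f\in\mathcal{L}\subseteq\mathscr{L}$, and applying $\phi$ yields $(x_1\cdots x_n)^l\,\phi(f)=0$ in the multi-Rees algebra.

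The remaining task is to cancel $(x_1\cdots x_n)^l$, which requires $x_1\cdots x_n$ to be a nonzero divisor in the multi-Rees algebra. It is a nonzero divisor in $R[\mathbf{x}]$ by Corollary~\ref{nonzero-divisor1} (its only term has coefficient $1$, which is a nonzero divisor), and it remains one in $R[\mathbf{x}][I_1t_1,\dots,I_rt_r]\subseteq R[\mathbf{x}][t_1,\dots,t_r]$ since the overring is a free $R[\mathbf{x}]$-module. Cancellation yields $\phi(f)=0$, hence $f\in\mathscr{L}$; combining the two inclusions with Theorem~\ref{equations2} then delivers the equality. I do not anticipate a genuine obstacle: the only thing to verify carefully is the nonzero-divisor statement, because the saturating element has been replaced by the uniform product of variables rather than the ideal-specific $\prod_j(f_{k_j})_{\red}$ used in Theorem~\ref{equations2}.
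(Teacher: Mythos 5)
Your proposal is correct and follows essentially the same route the paper intends: the paper's justification is exactly the observation that each $(f_{k_j})_{\red}$ is a unit times a monomial in $\mathbf{x}$, so that Theorem~\ref{equations2} gives $\mathscr{L}\subseteq\mathcal{L}:(x_1\cdots x_n)^\infty$, while the reverse inclusion is the same substitution-and-cancellation argument from the first half of that theorem's proof with $x_1\cdots x_n$ as the nonzero divisor. Your write-up is in fact more careful than the paper's one-line remark, since you explicitly handle the case where $\prod(f_{k_j})_{\red}$ does not involve every variable by sandwiching the two saturations.
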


Now we go over some examples. In the following examples, whenever we compute Gr\"{o}bner basis, we use the following algorithms in~\cite{gamanda2019syzygy}: Division Algorithm 2.3, $S$-Polynomial Algorithm 2.4, and Buchberger's Algorithm 2.5. For convenience in each example instead of $a+N\mathbb{Z}$, we write $a$. We just remind that the meaning of the ideal $\mathcal{F}$, is mentioned in Proposition~\ref{equations-grobner1}.

\begin{example}\label{counter1}
	We consider the polynomial ring $\mathbb{Z}/9\mathbb{Z}[x_1,x_2,x_3]$. Let $f_1=2x_1^2x_2+6x_3, f_2=6x_1x_3, f_3=3x_3^2$. We consider the ideals $I_1=\langle f_1,f_2,f_3\rangle$ and $I_2=\langle f_2,f_3\rangle$. Then we have the epimorphism
	\begin{gather*}
	\phi:\mathbb{Z}/9\mathbb{Z}[T_{3,2},T_{2,2},T_{3,1},T_{2,1},T_{1,1},x_1,x_2,x_3]\rightarrow \mathbb{Z}/9\mathbb{Z}[x_1,x_2,x_3][I_1t_1,I_2t_2]\\
	T_{k,j}\mapsto f_kt_j.
	\end{gather*} 
	Suppose $\mathscr{L}=\ker(\phi)$. We have
	\begin{gather*}
	\mathcal{F}=\langle y(2x_1^2x_2+6x_3)(2x_1x_3)-1, (2x_1^2x_2+6x_3)T_{2,1}-6x_1x_3T_{1,1},(2x_1^2x_2+6x_3)T_{3,1}-3x_3^2T_{1,1},\\
	2x_1x_3T_{3,2}-x_3^2T_{2,2},3T_{2,2},T_{2,2}^2\rangle.
	\end{gather*}
	We put lexicographic order on the ring
	$$
	\mathbb{Z}/9\mathbb{Z}[y,T_{3,2},T_{2,2},T_{3,1},T_{2,1},T_{1,1},x_1,x_2,x_3].
	$$
	Let $G$ be intersection of Gr\"{o}bner basis of $\mathcal{F}$ with 
	$$
	\mathbb{Z}/9\mathbb{Z}[T_{3,2},T_{2,2},T_{3,1},T_{2,1},T_{1,1},x_1,x_2,x_3].
	$$
	Then we have
	\begin{gather*}
	\mathscr{L}=G=\langle x_1x_2T_{2,1}-3x_3T_{1,1}, 2x_1T_{3,1}-x_3T_{2,1}, 2x_1T_{3,2}-x_3T_{2,2}, 3T_{2,1}, T_{2,1}^2, 3T_{3,1}, T_{3,1}^2,\\
	3T_{2,2}, T_{2,2}^2, 3T_{3,2}, T_{3,2}^2, T_{2,1}T_{3,1}, T_{2,1}T_{2,2}, T_{2,1}T_{3,2}, T_{3,1}T_{2,2}, T_{3,1}T_{3,2}, T_{2,2}T_{3,2}\rangle.
	\end{gather*}
\end{example}

\begin{example}\label{counter2}
	We consider the polynomial ring $\mathbb{Z}/8\mathbb{Z}[x_1,x_2,x_3]$. Let $f_1=2x_1^2x_2, f_2=2x_1x_3, f_3=x_1^2, f_4=x_1^2x_2, f_5=x_1x_3$. We consider the ideals $I_1=\langle f_1,f_2\rangle$, $I_2=\langle f_1,f_3\rangle$ and $I_3=\langle f_3,f_4,f_5\rangle$ . Then we have the epimorphism
	\begin{gather*}
	\phi:\mathbb{Z}/8\mathbb{Z}[T_{5,3},T_{4,3},T_{3,3},T_{1,2},T_{3,2},T_{2,1},T_{1,1},x_1,x_2,x_3]\rightarrow \mathbb{Z}/8\mathbb{Z}[x_1,x_2,x_3][I_1t_1,I_2t_2,I_3t_3]\\
	T_{k,j}\mapsto f_kt_j.
	\end{gather*} 
	Suppose $\mathscr{L}=\ker(\phi)$. We have
	\begin{gather*}
	\mathcal{F}=\langle yx_1x_2x_3-1, x_1^2x_2T_{2,1}-x_1x_3T_{1,1}, x_1^2T_{1,2}-2x_1^2x_2T_{3,2},4T_{1,1},2T_{1,1}^2,T_{1,1}^3,\\
	x_1^2x_2T_{5,3}-x_1x_3T_{4,3}, x_1^2T_{5,3}-x_1x_3T_{3,3}\rangle.
	\end{gather*}
	We put lexicographic order on the ring
	$$
	\mathbb{Z}/8\mathbb{Z}[y,T_{5,3},T_{4,3},T_{3,3},T_{1,2},T_{3,2},T_{2,1},T_{1,1},x_1,x_2,x_3].
	$$
	Let $G$ be intersection of Gr\"{o}bner basis of $\mathcal{F}$ with 
	$$
	\mathbb{Z}/8\mathbb{Z}[T_{5,3},T_{4,3},T_{3,3},T_{1,2},T_{3,2},T_{2,1},T_{1,1},x_1,x_2,x_3].
	$$
	Then we have
	\begin{gather*}
	\mathscr{L}=G=\langle T_{1,2}-2x_2T_{3,2}, T_{4,3}-x_2T_{3,3}, x_1T_{5,3}-x_3T_{3,3}, T_{5,3}T_{1,1}-x_2T_{3,3}T_{2,1}, x_1x_2T_{2,1}-x_3T_{1,1},\\
	4T_{1,1}, 2T_{1,1}^2, T_{1,1}^3, 4T_{2,1}, 2T_{2,1}^2, T_{2,1}^3, T_{2,1}T_{1,1}^2, T_{2,1}^2T_{1,1}, 2T_{2,1}T_{1,1}\rangle.
	\end{gather*}
	Note that this is the Gr\"{o}bner basis, and other relations such as $T_{1,2}^3, 2T_{1,2}^2,2T_{1,2}T_{1,1}$ can be generated by the equations of Gr\"{o}bner basis.
\end{example}

In Examples~\ref{counter1}, \ref{counter2}, we can see the $\mathscr{L}$ is not a saturated ideal of first syszygies. However we provide the following trivial example to see this fact easier.
\begin{example}\label{counter3}
	We consider the polynomial ring $\mathbb{Z}/8\mathbb{Z}[x_1,x_2]$. Let $f_1=2x_1, f_2=2x_2$, and let the ideal $I$ be generated by $f_1$ and $f_2$. We consider the Rees algebra $\mathbb{Z}/8\mathbb{Z}[x_1,x_2][It]$. Then we have the epimorphism
	\begin{gather*}
	\phi:\mathbb{Z}/8\mathbb{Z}[T_{1,1},T_{2,1},x_1,x_2]\rightarrow \mathbb{Z}/8\mathbb{Z}[x_1,x_2][It]\\
	T_{k,j}\mapsto f_kt_j.
	\end{gather*}
	Suppose $\mathscr{L}=\ker(\phi)$. By~\cite[Corllary 5.4]{BJGR}, the ideal of first syzygies of $I$ is $J=\langle x_1T_{2,1}-x_2T_{1,1}, 4T_{1,1}, 4T_{2,1}\rangle$. We show that $\mathscr{L}$ is not equal to a saturated ideal of $J$. Suppose $J:h^\infty=\mathscr{L}$, where $h\in\mathbb{Z}/8\mathbb{Z}[x_1,x_2]$. If $h$ is a zero divisor, then by Corollary~\ref{nonzero-divisor1}, every term of $h$ is a multiple of 2. Hence there is a natural number $n$ such that $h^n=0$. Therefore we have
	$$
	J:h^\infty=\mathbb{Z}/8\mathbb{Z}[T_{2,1},T_{1,1},x_1,x_2].
	$$
	But we know that for example $T_{1,1}$ is not in $\mathscr{L}$. Thus in this case $J:h^\infty\neq\mathscr{L}$. So that we consider the case that $h$ is a nonzero divisor. Suppose that $J:h^\infty=\mathscr{L}$. Then $T_{1,1}^3\in J:h^\infty$. But $T_{1,1}^3\notin J$. Because if not, then 
	$$
	T_{1,1}^3=g_1(x_1T_{2,1}-x_2T_{1,1})+g_2(4T_{1,1})+g_3 (4T_{2,1}).
	$$
	We multiply both sides of equation above by 2 and we get
	$$
	2T_{1,1}^3=2g_1(x_1T_{2,1}-x_2T_{1,1}),
	$$
	which is not possible. Because all terms on the right side of equation have either $x_1$ or $x_2$. Then there is a natural number $n$ such that $h^nT_{1,1}^3\in J$. But by Corollary~\ref{nonzero-divisor1}, at least one of terms of $h^n$ is not a multiple of 2. Now we have
	$$
	h^nT_{1,1}^3=g_1(x_1T_{2,1}-x_2T_{1,1})+g_2(4T_{1,1})+g_3 (4T_{2,1}).
	$$ 
	Again we multiply both sides of the equation above by 2 and we get
	$$
	2h^nT_{1,1}^3=2g_1(x_1T_{2,1}-x_2T_{1,1}).
	$$
	We have $0\neq2h^nT_{1,1}^3=b_1T_{1,1}^3+\dots+b_kT_{1,1}^3$, where $b_i$ are terms that only involve some constants and $x$'s. On the other hand $2g_1=a_1+\dots+a_m$, where $a_i$ are terms. Then we have
	$$
	b_1T_{1,1}^3+\dots+b_kT_{1,1}^3=a_1x_1T_{2,1}+\dots+a_mx_1T_{2,1}-a_1x_2T_{1,1}-\dots-a_mx_2T_{1,1}.
	$$
	On the right side of the equation every $a_ix_1T_{2,1}$ must cancel. On the other hand every $a_ix_1T_{2,1}$ can only cancel with an $a_jx_2T_{1,1}$. But since right side is not zero, this not possible. Which is a contradiction.
\end{example}

\section{Defining ideal of the multi-Rees algebra when the ground ring is PID}\label{pid-section}

In this section we fix ideals $I_1,\dots,I_r$ of $R[\mathbf{x}]$, where $\mathbf{x}=x_1,\dots,x_n$, and $R$ is a PID. We denote a fixed generating set of the ideal $I_j$ by $\gen(I_j)$. Let $\{f_1,\dots,f_q\}=\bigcup_{i=1}^{r}\gen(I_i)$. Let 
$$
S:=R[\mathbf{x}]\left[\{T_{k,j}\}_{1\le j\le r,f_k\in\gen(I_j)}\right].
$$ 
We define and fix the $R[\mathbf{x}]$-algebra epimorphism 
$$
\phi : S \to R[\mathbf{x}][I_1t_1,\dots, I_rt_r], \ \text{by} \ \phi(T_{k,j})=f_kt_j,
$$
where $R[\mathbf{x}][I_1t_1,\dots, I_rt_r]$ is the multi-Rees algebra of ideals $I_j$. Let $\mathscr{L}=\ker(\phi)$. In the rest of this section $\mathbf{T}$ is the set of defined $T_{k,j}$. In this section, we fix the lexicographic order $\prec_2$ in $R[\mathbf{x},\mathbf{T}]$ on $T$ monomials. That means $T_{k_1,j_1}\prec_2 T_{k_2,j_2}$ if either $k_1<k_2$ or if $k_1=k_2$, then $j_1<j_2$. We denote the leading term of $f$ by $\LT(f)$. We also denote the leading monomial of $f$ (resp. the leading coefficient of $f$) by $\LM(f)$ (resp. $\LC(f)$). Note that when we put terms of a polynomial in descending order, coefficients are polynomials in $x$. Since we have not put any order on $x$, then leading coefficient of $f$ is a polynomial in $x$, and the leading monomial of $f$ is a monomial in $T$. 

\begin{theorem}\label{division-algorithm-1}
Let $F=(f_1,\dots,f_t)$ be an ordered $t$-tuple of polynomials in $R[\mathbf{x},\mathbf{T}]$ and let $f$ be a polynomial in $R[\mathbf{x},\mathbf{T}]$. Consider the following algorithm:
	
1. $g_1=g_2=\dots=g_t=0$, $s=0$, $g=f$, and $h=f$.
	
2. Find the smallest $j\in\{1,\dots,t\}$ such that $\LM(g)$ is a multiple of $\LM(f_j)$. If such a $j$ exists, replace $g_j$ by $\LC(f_j)\left(g_j+\frac{\LT(g)}{\LT(f_j)}\right)$, replace the other $g_k$ by $\LC(f_j)g_k$. Finally, replace $g$ by 
$$
\LC(f_j)\left(g-\frac{\LT(g)}{\LT(f_j)}f_j\right),
$$ 
and $s$ by $\LC(f_j)s$. Also, replace $h$ by $\LC(f_j)h$.
	
3. Repeat step 2 until there is no more $j\in\{1,\dots,t\}$ such that $\LM(g)$ is a multiple of $\LM(f_j)$. Then replace $s$ by $s+\LT(g)$ and $g$ by $g-\LT(g)$.
	
4. If now $g\neq 0$, start again with step 2. If $g=0$ stop.
	
This is an algorithm which gives us $t$-tuple $(g_1,\dots,g_t)\in R[\mathbf{x},\mathbf{T}]^t$ and the polynomial $s\in R[\mathbf{x},\mathbf{T}]$ such that
$$
af=h=g_1f_1+\dots+g_tf_t+s,
$$
and such that the following conditions are satisfied.
	
a. If $s\neq 0$, then none of terms of $s$ is divisible by any of $\LM(f_1),\dots,\LM(f_t)$.
	
b. $a$ is a product of $\LC(f_j)$.  
\end{theorem}
The algorithm terminates in finitely many steps, because in each step the leading term becomes strictly smaller

In this section by $\langle\dots\rangle$ we mean the ideal generated by elements inside the brackets. In particular for a set $A$, by $\langle A\rangle$, we mean the ideal generated by all elements of the set $A$.

Now we want to show that the defining ideal (i.e. $\ker(\phi)=\mathscr{L}$) of the multi-Rees algebra $R[\mathbf{x}][I_1t_1,\dots, I_rt_r]$ is a saturated ideal. For every $1\le j\le r$, let $T_{k_j,j}$ be the smallest $T$ variable under $\prec_2$. Then, in this section, we fix the ideal 
\begin{gather*}
\mathcal{L}=\langle\{f_{k_j}T_{k,j}-f_kT_{k_j,j};1\le j\le r,f_k\in I_j\}\rangle.
\end{gather*}

Using Theorem~\ref{division-algorithm-1}, and with a similar idea in the proof of Theorem~\ref{equations2}, we can prove the following result.
\begin{theorem}\label{equations3}
If
$$
\mathcal{E}=\mathcal{L}:\left(\prod f_{k_j}\right)^\infty,
$$
then $\mathscr{L}=\mathcal{E}$.
\end{theorem}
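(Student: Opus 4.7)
The plan is to mirror the proof of Theorem~\ref{equations2}, with two simplifications afforded by $R=B$ being a PID: the ring $B[\mathbf{x}][t_1,\dots,t_r]$ is an integral domain, so there is no need to work with ``reduced'' generators or to track $p$-power cancellations, and we may replace Theorem~\ref{division-algorithm-2} with the cleaner Theorem~\ref{division-algorithm-1}. Fix the lex order on $T$-monomials from Section~\ref{pid}, with $T_{k_j,j}$ the smallest $T$ at level $j$. Then every nontrivial generator $h=f_{k_j}T_{k,j}-f_kT_{k_j,j}$ (for $k\neq k_j$) satisfies $\LM(h)=T_{k,j}$ and $\lc(h)=f_{k_j}$.

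For the easy inclusion $\mathcal{E}\subseteq\mathscr{L}$, suppose $f\in\mathcal{E}$, so that $\left(\prod f_{k_j}\right)^l f\in\mathcal{L}$ for some $l$. Applying $\phi$ annihilates every generator of $\mathcal{L}$, hence $\left(\prod f_{k_j}\right)^l \phi(f)=0$ in $R[\mathbf{x}][t_1,\dots,t_r]$. Since this ring is a domain and each $f_{k_j}$ is nonzero, we conclude $\phi(f)=0$ and $f\in\mathscr{L}$.

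For the reverse inclusion $\mathscr{L}\subseteq\mathcal{E}$, take $f\in\mathscr{L}$ and apply Theorem~\ref{division-algorithm-1} with divisors the (nontrivial) generators $h_i$ of $\mathcal{L}$. This produces
$$
a f=\sum_i g_i h_i + s,
$$
where $a$ is a product of $\lc(h_i)=f_{k_j}$, and if $s\neq 0$ no term of $s$ is divisible by any $\LM(h_i)=T_{k,j}$ with $k\neq k_j$. Hence the only $T$-variables appearing in $s$ are the smallest $T_{k_j,j}$, one per level $j$. Substituting $T_{k,j}\mapsto f_k t_j$ sends the left side to $0$ because $f\in\mathscr{L}$, and sends each $h_i$ to $0$ by construction, so the substituted $s$ vanishes as well. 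Writing $s=\sum_{\mathbf{m}} p_{\mathbf{m}}(\mathbf{x})\,T_{k_1,1}^{m_1}\cdots T_{k_r,r}^{m_r}$, the substitution yields
$$
\sum_{\mathbf{m}} p_{\mathbf{m}}(\mathbf{x})\,f_{k_1}^{m_1}\cdots f_{k_r}^{m_r}\,t_1^{m_1}\cdots t_r^{m_r}=0,
$$
and comparing coefficients of the linearly independent $t$-monomials forces $p_{\mathbf{m}}\prod_j f_{k_j}^{m_j}=0$ for every $\mathbf{m}$. Since $B[\mathbf{x}]$ is a domain and the $f_{k_j}$'s are nonzero, each $p_{\mathbf{m}}=0$, so $s=0$. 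Thus $af\in\mathcal{L}$; since $a$ is a product of factors drawn from $\{f_{k_j}\}_{j=1}^{r}$, it divides $\left(\prod_j f_{k_j}\right)^N$ for $N$ large enough, so $\left(\prod f_{k_j}\right)^N f\in\mathcal{L}$ and $f\in\mathcal{E}$.

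The main point requiring care, and essentially the only nonroutine step, is the identification of $\LM(h_i)$ and $\lc(h_i)$ under the chosen order: this is what forces the remainder $s$ to involve only the smallest $T_{k_j,j}$'s and makes the substitution argument conclusive. The delicate $p$-adic bookkeeping used in Theorem~\ref{equations2} does not appear here, because the domain hypothesis on $B[\mathbf{x}]$ turns the final comparison of coefficients into a one-line cancellation.
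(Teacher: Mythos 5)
Your proof is correct and follows exactly the route the paper intends: the paper only sketches this result by saying it follows from Theorem~\ref{division-algorithm-1} together with the argument of Theorem~\ref{equations2}, and your write-up fills in precisely those details (identifying $\LM$ and $\lc$ of the Koszul generators, running the division algorithm, and using that $B[\mathbf{x}][t_1,\dots,t_r]$ is a domain to kill the remainder). No gaps; the domain hypothesis correctly replaces the $p$-adic bookkeeping of the $B/p^mB$ case.
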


\begin{proposition}\label{equations-grobner2}
Let 
$$
\mathcal{F}=\langle y\prod f_{k_j}-1\rangle+\mathcal{L}\subseteq R[\mathbf{x},\mathbf{T},y],
$$
and
$$
\mathcal{E}=\mathcal{L}:\left(\prod f_{k_j}\right)^\infty.
$$
Then $\mathcal{E}=\mathcal{F}\cap R[\mathbf{x},\mathbf{T}]$.
\end{proposition}
\begin{proof}
The proof is similar to the proof of Proposition~\ref{equations-grobner1}.
\end{proof}
\begin{corollary}
Suppose
$$
\mathcal{F}=\langle y\prod f_{k_j}-1\rangle+\mathcal{L}\subseteq R[\mathbf{x},\mathbf{T},y].
$$ 
Let $\prec$ be an elimination order for $y$ on the ring $R[\mathbf{x},\mathbf{T},y]$ and let $G$ be a Gr\"{o}bner basis for $\mathcal{F}$ with this order. Then $G\cap R[\mathbf{x},\mathbf{T}]$ is a Gr\"{o}bner basis for $\mathscr{L}$.
\end{corollary}

\begin{corollary}
Let all ideals $I_j$ be generated by terms. Suppose 
$$
\mathcal{E}=\mathcal{L}:(p_1 ... p_s x_1 ... x_n)^\infty,
$$ 
where $p_i$ are all non-associate prime factors of $f_{k_j}$'s. Then $\mathscr{L}=\mathcal{E}$.
\end{corollary}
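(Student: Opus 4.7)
The plan is to establish the two inclusions $\mathscr{L}\subseteq \mathcal{E}$ and $\mathcal{E}\subseteq \mathscr{L}$ separately, leaning on Theorem~\ref{equations3} for one direction and on the fact that in the PID case $\mathscr{L}$ is prime for the other.

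For the inclusion $\mathscr{L}\subseteq \mathcal{E}$, I would invoke Theorem~\ref{equations3} to write $\mathscr{L}=\mathcal{L}:(\prod f_{k_j})^\infty$ and then compare the two saturations. Because each $I_j$ is generated by terms, each $f_{k_j}$ has the form $c_j x^{\alpha_j}$ with $c_j\in B$, so $\prod f_{k_j}=u\prod_i p_i^{e_i}\cdot x^{\sum_j \alpha_j}$ for some unit $u$ and exponents $e_i\ge 1$; here I am using the definition of the $p_i$ as all non-associate prime factors of the $f_{k_j}$'s, which forces every prime dividing $\prod c_j$ to appear in the list $p_1,\dots,p_s$. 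Consequently $\prod f_{k_j}$ divides $(p_1\cdots p_s\,x_1\cdots x_n)^N$ (up to the unit $u$) for $N$ sufficiently large. The standard monotonicity of colon saturations (if $a\mid b^N$ then $\mathcal{L}:a^\infty\subseteq \mathcal{L}:b^\infty$) then yields $\mathscr{L}=\mathcal{L}:(\prod f_{k_j})^\infty\subseteq \mathcal{L}:(p_1\cdots p_s\,x_1\cdots x_n)^\infty=\mathcal{E}$.

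For the reverse inclusion $\mathcal{E}\subseteq \mathscr{L}$, the key observation is that $R=B$ is a domain, so the polynomial ring $R[\mathbf{x},t_1,\dots,t_r]$ is a domain, and hence so is its subring $R[\mathbf{x}][I_1t_1,\dots,I_rt_r]$; thus $\mathscr{L}=\ker\phi$ is a prime ideal. Given $h\in\mathcal{E}$, there exists $L$ with $(p_1\cdots p_s\,x_1\cdots x_n)^L h\in\mathcal{L}\subseteq \mathscr{L}$. Applying $\phi$ yields $(p_1\cdots p_s\,x_1\cdots x_n)^L\phi(h)=0$ in the multi-Rees algebra; since $p_1\cdots p_s\,x_1\cdots x_n$ is nonzero there and the algebra is a domain, we conclude $\phi(h)=0$, i.e., $h\in\mathscr{L}$.

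I do not expect a serious obstacle. The only step needing care is the divisibility argument for $\mathscr{L}\subseteq \mathcal{E}$, and its validity hinges entirely on the defining property of the list $p_1,\dots,p_s$: it must exhaust all non-associate prime factors appearing in any $f_{k_j}$, so that the ``coefficient part'' of $\prod f_{k_j}$ is supported on $p_1\cdots p_s$ and no extra prime is needed in the saturation.
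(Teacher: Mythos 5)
Your proof is correct and fills in, in essentially the intended way, the derivation that the paper leaves implicit: the corollary is stated without proof as a consequence of Theorem~\ref{equations3}, and your two inclusions (saturation monotonicity from $\prod f_{k_j}\mid(p_1\cdots p_s x_1\cdots x_n)^N$ for one direction, and the nonzero-divisor/domain argument for $\mathcal{E}\subseteq\mathscr{L}$, which is exactly the first half of the proof of Theorem~\ref{equations2}) are the right details. A small merit of your version is that the reverse inclusion does not require $p_1\cdots p_s x_1\cdots x_n$ to divide a power of $\prod f_{k_j}$, so it remains valid even when some variable $x_l$ occurs in no $f_{k_j}$, where a naive ``the two saturations coincide'' argument would be problematic.
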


Now we provide an example. In the following example, whenever we compute Gr\"{o}bner basis, we use the following algorithms in~\cite{gamanda2019syzygy}: Division Algorithm 2.3, $S$-Polynomial Algorithm 2.4, and Buchberger's Algorithm 2.5. We just remind that the meaning of the ideal $\mathcal{F}$, is mentioned in Proposition~\ref{equations-grobner2}.

\begin{example}
We consider the polynomial ring $\mathbb{Z}[x_1,x_2,x_3]$. Let $f_1=6x_1^2x_2, f_2=3x_1x_3, f_3=5x_1x_3^2, f_4=x_2x_3$. We consider the ideals $I_1=\langle f_1,f_2,f_3\rangle$ and $I_2=\langle f_1,f_2,f_4\rangle$. Then we have the epimorphism
\begin{gather*}
\phi:\mathbb{Z}[T_{4,2},T_{2,2},T_{1,2},T_{3,1},T_{2,1},T_{1,1},x_1,x_2,x_3]\rightarrow\mathbb{Z}[x_1,x_2,x_3][I_1t_1,I_2t_2]\\
T_{k,j}\mapsto f_kt_j.
\end{gather*}
Suppose that $\mathscr{L}=\ker(\phi)$. We have
\begin{gather*}
\mathcal{F}=\langle 2.3yx_1x_2x_3-1, 6x_1^2x_2T_{2,1}-3x_1x_3T_{1,1}, 6x_1^2x_2T_{3,1}-5x_1x_3^2T_{1,1},\\
6x_1^2x_2T_{2,2}-3x_1x_3T_{1,2}, 6x_1^2x_2T_{4,2}-x_2x_3T_{1,2}\rangle.
\end{gather*}
We put lexicographic order on the ring
$$
\mathbb{Z}[y,T_{4,2},T_{2,2},T_{1,2},T_{3,1},T_{2,1},T_{1,1},x_1,x_2,x_3].
$$
Let $G$ be intersection of Gr\"{o}bner basis of $\mathcal{F}$ with 
$$
\mathbb{Z}[T_{4,2},T_{2,2},T_{1,2},T_{3,1},T_{2,1},T_{1,1},x_1,x_2,x_3].
$$
Then we have
\begin{gather*}
\mathscr{L}=G=\langle 2x_1x_2T_{2,1}-x_3T_{1,1}, 3T_{3,1}-5x_3T_{2,1}, 2x_1x_2T_{2,2}-x_3T_{1,2},\\
T_{2,2}T_{1,1}-T_{1,2}T_{2,1}, 3x_1T_{4,2}-x_2T_{2,2}, 3x_3T_{4,2}T_{1,1}-2x_2^2T_{2,2}T_{2,1},\\
5x_3^2T_{4,2}T_{1,1}-2x_2^2T_{2,2}T_{3,1}, 5x_1x_3T_{4,2}T_{1,1}-x_2T_{1,2}T_{3,1},5x_1x_3T_{4,2}T_{2,1}-x_2T_{2,2}T_{3,1},\\
10x_1^2T_{4,2}T_{2,1}-T_{1,2}T_{3,1}, 3x_3T_{4,2}T_{1,2}-2x_2^2T_{2,2}^2, 5x_3^3T_{4,2}T_{1,2}T_{2,1}-2x_2^2T_{2,2}^2T_{3,1}\rangle.
\end{gather*}
\end{example}

	 \begin{bibdiv}
\begin{biblist}
	    \bib{adams1994introduction}{book}{
		title={An introduction to Gr\"{o}bner bases},
		author={Adams, William W.},
		author={Loustaunau, Philippe},
		volume={3},
		year={1994},
		publisher={American Mathematical Society}
}
			\bib{buchberger1984critical}{article}{
	title={A critical-pair/completion algorithm for finitely generated ideals in rings},
	author={Buchberger, Bruno},
	journal={Logic and machines: decision problems and complexity},
	volume={171},
	pages={137--161},
	year={1984}
}
	     \bib{buchberger1965algorithmus}{thesis}{
	title={Ein algorithmus zum auffinden der basiselemente des restklassenringes nach einem nulldimensionalen polynomideal},
	author={Buchberger, Bruno},
	year={1965},
	school={Doctoral Dissertation Math. Inst. University of Innsbruck, Austria}
}
	    \bib{coxlinsosa}{article}{
		title={Multi-Rees algebras and toric dynamical systems},
		author={Cox, David A},
		author={Lin, Kuei-Nuan},
		author={Sosa, Gabriel},
		journal={Proceedings of the American Mathematical Society},
		volume={147},
		number={11}
		pages={4605--4616},
		year={2019}
}
      \bib{EH2021}{article}{
	title={Efficient Gr\"{o}bner bases computation over principal ideal rings},
	author={Eder, Christian}
	author={Hofmann, Tommy},
	journal={Journal of Symbolic Computation},
	volume={103},
	pages={1--13},
	year={2021}
}
	\bib{eisenbud2018reesalgebra}{article}{
	title={The ReesAlgebra package in Macaulay2},
	author={Eisenbud, David},
	journal={Journal of Software for Algebra and Geometry},
	volume={8},
	number={4960},
	pages={2},
	year={2018}
}
\bib{eisenbud2003rees}{article}{
	title={What is the Rees algebra of a module?},
	author={Eisenbud, David},
	author={Huneke, Craig},
	author={Ulrich, Bernd}
	journal={Proceedings of the American Mathematical Society},
	volume={131},
	number={3},
	pages={701--708},
	year={2003}
}
       \bib{ene2011gr}{book}{
	title={Gr\"{o}bner bases in commutative algebra},
	author={Ene, Viviana},
	author={Herzog, J{\"u}rgen},
	volume={130},
	year={2011},
	publisher={American Mathematical Society}
}
       \bib{gamanda2019syzygy}{article}{
	title={The syzygy theorem for B\'ezout rings},
	author={Gamanda, Maroua},
	author={Lombardi, Henri},
	author={Neuwirth, Stefan},
	author={Yengui, Ihsen},
	journal={Mathematics of Computation},
	volume={89},
	number={322},
	pages={941--964},
	year={2020}
}
      \bib{hungerford1968structure}{article}{
	title={On the structure of principal ideal rings},
	author={Hungerford, Thomas},
	journal={Pacific Journal of Mathematics},
	volume={25},
	number={3},
	pages={543--547},
	year={1968}
}
       \bib{BJGR}{article}{
	title={Gr\"{o}bner bases and syzygy theorem for direct product of principal ideal rings},
	author={Jabbar Nezhad, Babak},
	journal={arXiv:1910.12280},
	year={2019}
}
		\bib{jabarnejad2016rees}{article}{
	title={Equations defining the multi-Rees algebras of powers of an ideal},
	author={Jabarnejad, Babak},
	journal={Journal of Pure and Applied Algebra},
	volume={222},
	pages={1906--1910},
	year={2018}
}
      \bib{kacem2010dynamical}{article}{
	title={Dynamical Gr\"{o}bner bases over Dedekind rings},
	author={Kacem, Amina Hadj},
	author={Yengui, Ihsen},
	journal={Journal of Algebra},
	volume={324},
	number={1},
	pages={12--24},
	year={2010}
}
        \bib{kan88}{article}{
	title={Computing a Gr\"{o}bner basis of a polynomial ideal over a Euclidean domain},
	author={Kandri-Rody, \"Abdelilah},
    author={Kapur, Deepak},
    journal={Journal of symbolic computation},
    volume={6},
    number={1},
    pages={37--58},
    year={1988}
}
       \bib{KSJ2019}{article}{
	title={Algorithms for computing mixed multiplicities, mixed volumes and sectional Milnor numbers},
	author={Goel, Kriti},
	author={Roy, Sudeshna},
	author={Verma, JK},
	journal={arXiv:1902.07384},
	year={2019}
}
      \bib{KLU2017}{article}{
	title={The equations defining blowup algebras of height three Gorenstein ideals},
	author={Kustin, Andrew},
	author={Polini, Claudia},
	author={Ulrich, Bernd},
	journal={Algebra \& Number Theory},
	volume={11},
	number={7},
	pages={1489--1525},
	year={2017}
}
       \bib{LP14}{article}{
    title={Rees algebras of truncations of complete intersections},
	author={Lin, Kuei-Nuan},
	author={Polini, Claudia},
	journal={Journal of Algebra},
	volume={410},
	pages={36--52},
	year={2014}
}
      \bib{MU96}{article}{
	title={Rees algebras of ideals with low codimension},
	author={Morey, Susan},
	author={Ulrich, Bernd},
	journal={Proceedings of the American Mathematical Society},
	volume={124},
	number={12},
	pages={3653--3661},
	year={1996}
}
       \bib{MP13}{article}{
	title={The equations of Rees algebras of equimultiple ideals of deviation one},
	author={Mui{\~n}os, Ferran},
	author={Planas-Vilanova, Francesc}
	journal={Proceedings of the American Mathematical Society},
	volume={141},
	number={4},
	pages={1241--1254},
	year={2013}
}
       \bib{R99}{article}{
	title={On the defining equations of multi-graded rings},
	author={Ribbe, J.},
	journal={Communications in Algebra},
	volume={27},
    number={3},
	pages={1393--1402},
	year={1999}
}
      \bib{Sosa14}{article}{
    title={On the Koszulness of multi-Rees algebras of certain strongly stable ideals},
	author={Sosa, Gabriel},
	journal = {arXiv:1406.2188},
	year={2014}
}
       \bib{t78}{article}{
	title={\"Uber B. Buchbergers Verfahren, Systeme algebraischer Gleichungen zu l\"osen},
	author={Trinks, Wolfgang},
	journal={Journal of Number Theory},
	volume={10},
	number={4},
	pages={475--488},
	year={1978}
}
      \bib{VW91}{article}{
	title={On the equations of Rees algebras},
	author={Vasconcelos, Wolmer V.},
	journal={Journal f{\"u}r die Reine und Angewandte Mathematik},
	volume={418},
	pages={189--218},
	year={1991}
}
       \bib{yengui2006dynamical}{article}{
	title={Dynamical Gr\"{o}bner bases},
	author={Yengui, Ihsen},
	journal={Journal of Algebra},
	volume={301},
	number={2},
	pages={447--458},
	year={2006}
}
	\end{biblist}
\end{bibdiv}

\end{document}